\documentclass[reqno]{amsart}
\usepackage[foot]{amsaddr}
\usepackage[english]{babel}
\AtBeginDocument{}
\usepackage[margin=1in,centering]{geometry}
\usepackage[utf8]{inputenc}
\usepackage{amsmath,amssymb,amsthm}
\numberwithin{equation}{section}
\usepackage{mathtools}
\usepackage{enumitem}
\usepackage{comment}
\usepackage{mleftright}

\usepackage{color}
\usepackage{hyperref}
\usepackage{cleveref}
\newcommand{\parens}[1]{\mleft(#1\mright)}
\newcommand{\set}[1]{\mleft\{#1\mright\}}

\newcommand{\spliteq}[2]{\begin{equation}#1\begin{split}#2\end{split}\end{equation}}
\newcommand{\eq}[1]{\begin{equation}#1\end{equation}}
    \newcommand{\blfootnote}[1]{{\renewcommand{\thefootnote}{\roman{footnote}}\footnotetext[0]{#1}}}
\newtheorem{theorem}{Theorem}
\newtheorem{lemma}{Lemma}
\newtheorem{corollary}{Corollary}

\renewcommand{\epsilon}{\varepsilon}

\begin{document}
\title{Zeros of Ramanujan-Like Polynomials
}

\author{Benjamin Fichter${}^{1\dag}$, Vishal Shah${}^{1\dag}$, Wiseley Wong${}^{1\ast}$}

\begin{abstract} 
In this paper we study a naturally occurring variant of Ramanujan polynomials with a Bernoulli convolution structure, $S_v$, related to the double Mordell-Tornheim series. In particular, we prove for even $v$ all zeros of $S_v$ lie on the complex unit circle and for even $v \ge 4$ are simple with the exception of roots at $1, -1$ which are double roots.

%trivial zeros of the double Mordell-Tornheim series at negative even integers imply that $|\alpha|=|\beta|$. We do so by analyzing a sequence of self-inversive polynomials with a Bernoulli convolution structure, $S_v(z)$. In particular, we prove all of $S_v(z)$ zeros lie on the complex unit circle. We use tools from Lal\'in and Smyth \cite{lalin2012unimodularityzerosselfinversivepolynomials} as well as a Bernoulli number convolution identity from Agoh and Dilcher \cite{AGOH2007105}. In addition, we note similarities to well studied Ramanujan polynomials \cite{maji2023zerosramanujantypepolynomials}.
\end{abstract}

\maketitle

\blfootnote{${}^{\dag}$ denotes joint first authorship, \email{bfichte1@umd.edu}, \email{shah2025@umd.edu}}
  \blfootnote{${}^{\ast}$ denotes corresponding author, \email{wwong123@umd.edu}}
\blfootnote{${}^1$~University of Maryland, College Park, MD 20742, USA}

% Other things to write about
% not product of cyclomatics follows from pidgon hole principle and lemma 2.
% Conj, Are their finitely meny common roots,
% speculative: set of roots are dense on unit circle?
% what do multiplicities tell us abt series?
% upper bound on |S_v| for z on unit circle
% is S_v/(z^2-1)^2 irreducable in Z[x]? - hard and req hard gaolios theory

\section{Introduction}

The motivation for this paper is to better predict the presence of trivial zeros of the analytic continuation of the double Mordell-Tornheim series initially studied by Tornheim \cite{tornheim1950harmonic}. The definition of the double Mordell–Tornheim series for arbitrary complex $\alpha$ and $\beta$ is 
\[
\sum_{m,n\geq1} \frac1{n^s}\frac1{m^s}\frac1{(\alpha n+\beta m)^s}.
\]

For our purposes, we assume $\alpha$ and $\beta$ are non-zero. We indirectly prove a conjecture proposed by Karl Dilcher \cite{PrivateCommunication}: \textit{If the double Mordell-Tornheim series has trivial zeros at some even negative integer $s$, then} $|\alpha|=|\beta|$. Karl Dilcher initially postulated the conjecture in the form of trivial zeros of the double Mordell-Tornheim series for $\alpha=1$, which is algebraically equivalent by choosing $\beta'= \beta/\alpha$ and multiplying through by $\frac1{\alpha^s}$ \cite{PrivateCommunication}.
There exists a naturally occurring sequence of polynomials $S_v(z)$, defined in Theorem \ref{thm1}, related to the double Mordell-Tornheim series using Crandall’s free-parameter method and Bernoulli-Barnes numbers \cite{PrivateCommunication,borwein2018derivatives}. In particular, the double Mordell-Tornheim series evaluated at negative even integers $-v$ is proportional to $S_v(\alpha/\beta)$. Hence, we prove the following statement equivalent to the conjecture above. We define $B_n$ as the $n$-th Bernoulli number.

\begin{theorem}\label{thm1}
{Polynomials of the form}
\eq{S_v(z) := 1+z^{3v+2}+2(2v+1)\binom{2v}{v}\frac{3v+2}{B_{3v+2}} \sum_{r=0}^{\left\lfloor   \frac{v}{2} \right\rfloor -1} \binom{v}{2r+1}\frac{B_{v+2+2r}}{v+2+2r}\cdot \frac{B_{2v-2r}}{2v-2r}z^{v+2+2r} 
}
have all of their zeros on the complex unit circle for all even non-negative $v$.
\end{theorem}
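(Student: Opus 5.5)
The plan is to exploit self-inversiveness and count zeros on the unit circle directly via a trigonometric sign-change argument. I would not use a Lakatos--Losonczi type coefficient bound: I expect it to fail because the middle coefficients are not small.

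\textbf{Step 1: symmetry.} Put $n=3v+2$ and write $S_v(z)=1+z^n+\sum_{r} c_r z^{v+2+2r}$. Under $r\mapsto r'=\tfrac v2-1-r$ the exponent $v+2+2r$ goes to $n-(v+2+2r)=2v-2r$. Also $\binom{v}{2r'+1}=\binom{v}{v-2r-1}=\binom{v}{2r+1}$, and the two Bernoulli factors $\frac{B_{v+2+2r}}{v+2+2r}$ and $\frac{B_{2v-2r}}{2v-2r}$ are simply interchanged. Hence $c_{r'}=c_r$, so $S_v$ is a real palindromic polynomial of degree $n$. Consequently $z^{-n/2}S_v(z)$ is real on $|z|=1$. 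Writing $z=e^{i\theta}$,
\[
e^{-in\theta/2}S_v(e^{i\theta})=2\cos\!\big(\tfrac{n\theta}{2}\big)+2\sum_{r< v/4} c_r\cos\!\big(k_r\theta\big)\quad(+\,c_{\rm mid}\text{ if }v/2\text{ is odd}),
\]
with $k_r=\tfrac{n}{2}-(v+2+2r)$. The crucial feature is that $0\le k_r\le \tfrac v2-1$, which is much smaller than $\tfrac n2=\tfrac{3v+2}{2}$.

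\textbf{Step 2: signs and total mass.} All $a=v+2+2r$ and $b=2v-2r$ are even with $a+b=n$. Since $\operatorname{sign}B_{2k}=(-1)^{k+1}$, the sign of $B_aB_b/B_{a+b}$ is the constant $(-1)^{a/2+b/2+2-(n/2+1)}=-1$. Thus every $c_r$ is negative; write $c_r=-d_r$ with $d_r>0$. Next I would show $\sum_r d_r=2$, i.e.\ $S_v(1)=0$. This should be exactly a Bernoulli convolution identity of the Agoh--Dilcher type: a sum $\sum_r\binom{v}{2r+1}\frac{B_aB_b}{ab}$ evaluated in closed form as a multiple of $\frac{B_{n}}{n}\big/\big((2v+1)\binom{2v}{v}\big)$. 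The prefactor in the theorem is visibly chosen to normalise it. The same identity, with the parity of exponents used, gives $S_v(-1)=0$.

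\textbf{Step 3: counting zeros.} Let $D(\theta)=\sum d_r\cos(k_r\theta)$, with the middle term, if present, halved so that $D(0)=1$. Then on the circle the real function is $F(\theta)=2\big(\cos(n\theta/2)-D(\theta)\big)$. We have $|D(\theta)|\le1$, with equality only if $\cos(k_r\theta)=\pm1$ simultaneously for all $r$ with the same sign. Since the $k_r$ include consecutive integers, this forces $\theta\in\{0,\pi\}$ (mod $2\pi$). At the points $\theta_j=2\pi j/n$ with $0<\theta_j<\pi$ and $\theta_j\neq\pi$, the value $\cos(n\theta_j/2)=(-1)^j$ alternates. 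So $F(\theta_j)$ has sign $(-1)^j$ strictly, and each gap $(\theta_j,\theta_{j+1})$ contains a zero. The same holds on $(\pi,2\pi)$. This yields roughly $n-4$ zeros away from $\pm1$. Near $\theta=0$ and $\theta=\pi$, $F$ vanishes to even order: $F(0)=0$, and $F'(0)=0$ by evenness. This accounts for the double roots at $z=1$ and $z=-1$. Adding up gives $n$ zeros on the circle, which equals $\deg S_v$, so all zeros lie there.

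\textbf{Main obstacle.} I expect the hardest part to be the bookkeeping near $\theta=0$ and $\theta=\pi$. The first sample points $\theta_1$ and $\theta_{n-1}$, where $\cos$ equals $-1$, have the right sign. But one must check that $F$ has the correct sign on the first arc so that the double zero at $0$ is not double-counted against an interval crossing. I would handle this via a second-derivative comparison: $F''(0)=2\big(-n^2/4+\sum d_rk_r^2\big)<0$ because $k_r<n/2$, so $F<0$ immediately to the right of $0$. Then I would count sign changes on $[\theta_1,\pi-\theta_1]$ only. The parity case $v\equiv 0,2\pmod 4$ (whether $\pi$ is some $\theta_j$) needs a separate count. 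The identity in Step 2 is the only genuinely non-elementary input.
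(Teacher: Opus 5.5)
\textbf{The gap is Step 2.} The identity $\sum_r d_r=2$, i.e.\ $S_v(\pm1)=0$, is the whole mathematical content of the theorem. You only assert that it ``should be'' of Agoh--Dilcher type because the prefactor looks like a normalisation, which reads the conclusion off the statement. Without it you have neither $|D(\theta)|\le 1$ nor the zeros at $\pm1$, so Step 3 has nothing to stand on.

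The paper proves it (Lemma~\ref{lemma-identity}) by specialising Agoh--Dilcher's formula for $(B_k+B_k)^m$ to $k=v$, $m=v+2$. The umbral side becomes $\sum_s\binom{v+2}{2s}B_{v+2s}B_{2v+2-2s}$, and its boundary term $2B_vB_{2v+2}$ cancels the $r=v/2$ term on the other side. Solving for $B_{3v+2}$ leaves $\sum_r\binom{v}{2r+1}X_rB_{v+2+2r}B_{2v-2r}$, where the weight $X_r$ is \emph{not} of the form $c/((v+2+2r)(2v-2r))$. Only after averaging $X_r$ with $X_{v/2-1-r}$ does the weight become $3(v+1)(3v+2)/((v+2+2r)(2v-2r))$. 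The averaging is allowed because the rest of the summand is invariant under $r\mapsto v/2-1-r$. This gives exactly
\[
\frac{B_{3v+2}}{3v+2}=-(2v+1)\binom{2v}{v}\sum_{r=0}^{v/2-1}\binom{v}{2r+1}\frac{B_{v+2+2r}}{v+2+2r}\,\frac{B_{2v-2r}}{2v-2r}.
\]
That derivation, or some other proof of this identity, is what is missing.

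\textbf{Lakatos--Losonczi does not fail.} That criterion bounds the \emph{total} mass $\frac12\sum_{j=1}^{N-1}|A_j|$ (with $N=3v+2$, your $n$), not the size of individual coefficients. Once Steps 1--2 hold, every middle coefficient is negative, so $\frac12\sum_j|A_j|=\frac12\sum_r d_r=1=|A_N|$: the criterion holds with equality. The paper's proof is exactly your Step 1 (Lemma~\ref{lemma-inversive}) plus your Step 2 plus Theorem~\ref{UnitCircle-thm}. Your Step 3 is a hands-on reproof of that theorem in the equality case.

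\textbf{Step 3 needs repairs.} It can be made to work, but not as written.
\begin{itemize}
\item The $k_r=\tfrac v2-1-2r$ all have the same parity, so they never include consecutive integers. The correct reason that $|D(\theta)|<1$ off $\{0,\pi\}$ is that $k=1$ occurs when $4\mid v$, and $k=0,2$ both occur when $v\equiv 2\pmod 4$, $v\ge 6$.
\item For $v=2$ the only frequency is $0$, so $D\equiv 1$, and in fact $S_2(z)=(z^4-1)^2$; strict alternation fails there. For $v=0$ you have $S_0=1+z^2$ and $\sum_r d_r=0$. Both cases need separate treatment.
\item The endpoint bookkeeping you call the main obstacle is trivial. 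Since $N$ is even, $\pi=\theta_{N/2}$ always, so no parity split is needed. The $N/2-1$ points $\theta_1,\dots,\theta_{N/2-1}\in(0,\pi)$ give $N/2-2$ zeros strictly inside $(0,\pi)$, and likewise on $(\pi,2\pi)$. Palindromy with $S_v(\pm1)=0$ gives $S_v'(\pm1)=0$. So $(N-4)+2+2=N$ closes the count with no second-derivative comparison.
\end{itemize}
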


These polynomials $S_v$ and their relation to the double Mordell-Tornheim series have many similarities to Ramanujan polynomials \cite{dixit2024recent,maji2023zeros,smyth2011zeros}, defined as 
\[
    R_{2k+1}(z):=\sum_{j=0}^{k+1} \frac{B_{2j}B_{2k+2-2j}}{(2j)!(2k+2-2j)!}z^{2j}.
\]
For example, both have roots restricted to the unit circle, contain a convolution of Bernoulli numbers, and can be used for evaluation of number theoretic series at negative integers.
%Ramanujan polynomials share similarities with $S_v(z)$, their zeros being of significance to a zeta function and their construction relating to identities involving Bernoulli numbers. 
Murty, Smyth, and Wang \cite{smyth2011zeros} proved that all non-real roots of Ramanujan polynomials lie on $|z|=1$. This property also holds for $S_v(z)$, but unlike Ramanujan polynomials, this property is extended to real roots as well. Future work involving $S_v$ may be motivated by extending results known for Ramanujan polynomials.

\subsection{Tools and Preliminaries}\label{tools}
We will prove Theorem 1 by using a classical theorem of Lakatos and Losonczi \cite[Theorem 1]{lakatos2004self}:
\begin{theorem}\label{UnitCircle-thm}
    
\textit{If polynomial $P(z)=\sum_{j=0}^{d}A_jz^j$ is a self-inversive polynomial and}
\eq{\label{Inequality}
    \frac12\sum_{j=1}^{d-1} |A_j| \le |A_d|,
}
then all zeros of $P(z)$ lie on the unit circle.
\end{theorem}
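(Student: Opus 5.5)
The plan is to deduce the statement from Cohn's classical characterization: a self-inversive polynomial $P$ of exact degree $d$ has all of its zeros on the unit circle if and only if all zeros of $P'$ lie in the closed unit disk $|z|\le 1$. We assume $A_d\neq 0$, since $d$ is the degree. It then suffices to show that $P'(z)\neq 0$ whenever $|z|>1$.

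The key preliminary observation uses self-inversiveness. There is a unimodular $\epsilon$ with $A_{d-j}=\epsilon\overline{A_j}$ for all $j$, and hence $|A_j|=|A_{d-j}|$. Pairing the index $j$ with $d-j$ in the sum $\sum_{j=1}^{d-1} j|A_j|$ gives $j|A_j|+(d-j)|A_{d-j}| = d|A_j|$. Averaging over the pairing therefore yields
\[
\sum_{j=1}^{d-1} j|A_j| = \frac d2\sum_{j=1}^{d-1}|A_j| \le d|A_d|,
\]
where the last step is exactly hypothesis \eqref{Inequality}.

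Now take $|z|>1$ and write $P'(z)=dA_dz^{d-1}+\sum_{j=1}^{d-1} jA_jz^{j-1}$. By the triangle inequality, and since $|z|^{j-1}<|z|^{d-1}$ for $j<d$,
\[
|P'(z)| \ge d|A_d||z|^{d-1}-\sum_{j=1}^{d-1} j|A_j||z|^{j-1} \ge |z|^{d-1}\Bigl(d|A_d|-\sum_{j=1}^{d-1} j|A_j|\Bigr)\ge 0.
\]
The middle inequality is strict as soon as some $A_j$ with $1\le j\le d-1$ is nonzero. In that case $P'(z)\neq0$.

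In the degenerate case where all middle coefficients vanish, $P'(z)=dA_dz^{d-1}$, whose only zero is $0$. Either way, all zeros of $P'$ lie in $|z|\le1$, and Cohn's theorem gives the conclusion.

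The main obstacle is justifying the use of Cohn's theorem. If we may quote it, the argument above is complete. Otherwise we would prove it directly, via the decomposition $z^{d}\overline{P(1/\bar z)}=\bar\epsilon P(z)$ together with the Gauss–Lucas theorem and a Rouché/argument-principle count on circles $|z|=1\pm\delta$. Care is needed there with zeros of $P'$ lying exactly on $|z|=1$, which correspond to multiple zeros of $P$ on the circle.
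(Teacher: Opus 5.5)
Your proof is correct, and it takes a different route from the one the paper relies on.

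The paper does not prove this statement itself. It cites Lakatos--Losonczi and points to the Lal\'in--Smyth argument, which represents $P$ as $z^{d-n}h(z)+\lambda h^*(z)$ with $h$ having all zeros in $|z|\le 1$. For this statement the natural $h$ is the upper half of $P$, with the middle coefficient halved when $d$ is even. Hypothesis \eqref{Inequality} then says exactly that the leading coefficient of $h$ dominates the others, so $h$ has its zeros in the closed disk. A comparison of $z^{d-n}h$ with $\lambda h^*$ then puts every zero of $P$ on the circle.

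You instead transfer the hypothesis to $P'$. The pairing $j\leftrightarrow d-j$ turns \eqref{Inequality} into $\sum_{j=1}^{d-1}j|A_j|\le d|A_d|$, i.e.\ dominance of the leading term of $P'$, and Cohn's theorem finishes. The two routes are really instances of the same mechanism. For self-inversive $P$ one checks from $A_{d-j}=\epsilon\overline{A_j}$ that
\[
dP(z)=zP'(z)+\epsilon\, z^{d-1}\overline{P'(1/\bar z)},
\]
i.e.\ $P=z^{d-n}h+\epsilon h^*$ with $h=P'/d$ and $d-n=1$. So you are applying the Lal\'in--Smyth principle with a different auxiliary polynomial.

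What each route buys:
\begin{itemize}
\item Your choice is shorter, given Cohn's theorem.
\item It gives simplicity for free. Under strict inequality, the same estimate on $|z|=1$ gives $|P'(z)|>0$, so $P$ has no multiple zeros.
\item In the equality case, which is the one relevant for $S_v$, zeros of $P'$ on the circle occur exactly where every nonzero term $jA_jz^{j-1}$ ($1\le j\le d-1$) is antiparallel to $dA_dz^{d-1}$. This alignment criterion is in the spirit of the cosine condition of Theorem~\ref{Simple-Roots-Thm}.
\item The cited route has the advantage that this equality-case criterion comes already packaged, and that is what Section 4 uses.
\end{itemize}

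One correction to your fallback plan: Gauss--Lucas gives the direction of Cohn's theorem you do not need. The direction you use follows from the identity above, with no boundary subtleties. Write $P'(z)=dA_d\prod_{k=1}^{d-1}(z-w_k)$ with $|w_k|\le 1$. The second term of the identity is then $\epsilon\, d\,\overline{A_d}\prod_k(1-\overline{w_k}z)$. We have $|1-\overline{w_k}z|^2-|z-w_k|^2=(1-|z|^2)(1-|w_k|^2)\ge 0$, and $1-\overline{w_k}z\neq 0$ for $|z|<1$. Hence $|zP'(z)|<|z^{d-1}\overline{P'(1/\bar z)}|$ on $|z|<1$, so $P$ has no zeros there. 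By the symmetry $z\mapsto 1/\bar z$ of its zero set, $P$ has no zeros in $|z|>1$ either. Zeros of $P'$ on $|z|=1$ never need separate treatment.
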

For our application $P(z)=S_v(z)$ and $d=3v+2=6n+2$. Note that there is a brief proof of Theorem \ref{UnitCircle-thm} by Lal\'in and Smyth \cite[Corollary 3]{lalin2013unimodularity}. The proof of this corollary relies on Rouch\'es theorem \cite{Conway1978} applied to a family of \textit{self-inversive} polynomials, particularly functions that satisfy $P(z)=z^{d-n}h(z)+z^n\bar{h}(z)$. Interestingly, the motivation of Lal\'in and Smyth is to study the roots of Ramanujan polynomials. 

We break up the result into two sections, one to prove $S_v$ is self-inversive, and the other to prove inequality \eqref{Inequality} holds. We prove the following lemma in Section 2 to verify $S_v$ is self-inversive.

\begin{lemma}\label{lemma-inversive}
    We have for all even non-negative $v$ that $S_v$ is a self-inversive polynomial: There exists $\xi\ne 0$ such that
    \eq{\label{Inversive_eq}S_v(z) = \xi z^{3v+2}S_v(1/z).}
    In particular, $\xi=1$.
\end{lemma}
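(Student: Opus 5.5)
The plan is to check directly that the coefficient sequence of $S_v$ is palindromic. For a polynomial of degree $d=3v+2$ with real coefficients $A_j$, the identity $z^{d}S_v(1/z)=\sum_j A_j z^{d-j}$ shows that \eqref{Inversive_eq} with $\xi=1$ is equivalent to $A_j=A_{d-j}$ for all $j$. Since all Bernoulli numbers and binomial coefficients are real, no complex conjugation enters. Hence it suffices to verify this symmetry.

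First I handle the extreme terms. The constant term and the $z^{3v+2}$ term both have coefficient $1$, so they match. For $v=0$ the sum is empty, $S_0(z)=1+z^2$, and the claim is immediate. For even $v=2n\ge 2$, the remaining monomials are $z^{v+2+2r}$ for $0\le r\le n-1$, with coefficient
\[
c_r = C_v\binom{v}{2r+1}\frac{B_{v+2+2r}}{v+2+2r}\cdot\frac{B_{2v-2r}}{2v-2r}.
\]
Here $C_v=2(2v+1)\binom{2v}{v}\frac{3v+2}{B_{3v+2}}$ does not depend on $r$. All other coefficients vanish.

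The key step is the reflection $j\mapsto d-j$. It sends the exponent $v+2+2r$ to $3v+2-(v+2+2r)=2v-2r$. Writing $2v-2r=v+2+2r'$ gives $r'=n-1-r$. This is an involution of $\{0,\dots,n-1\}$, so the set of nonzero exponents is closed under reflection. The zero coefficients are then automatically symmetric.

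It remains to check $c_{r'}=c_r$.
\begin{itemize}
\item Since $2r'+1=v-1-2r=v-(2r+1)$, we get $\binom{v}{2r'+1}=\binom{v}{2r+1}$.
\item We have $v+2+2r'=2v-2r$ and $2v-2r'=v+2+2r$, so the two Bernoulli factors $\frac{B_{v+2+2r}}{v+2+2r}$ and $\frac{B_{2v-2r}}{2v-2r}$ simply swap.
\end{itemize}
The product is therefore unchanged, so $A_j=A_{d-j}$ for every $j$. This yields \eqref{Inversive_eq} with $\xi=1$.

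There is no real obstacle. The only points needing care are the index bookkeeping, namely that the range $0\le r\le \lfloor v/2\rfloor-1$ is mapped onto itself, and noting that real coefficients make the plain (unconjugated) form of \eqref{Inversive_eq} the correct self-inversive condition.
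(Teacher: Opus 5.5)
Your proposal is correct and takes essentially the same approach as the paper: both reduce the claim to the palindromic condition $A_j=A_{d-j}$ using that the coefficients are real, reindex via $r'=n-1-r$, and observe that $\binom{2n}{2r+1}$ is invariant while the two Bernoulli factors swap. Your explicit handling of $v=0$, and your check that $r\mapsto n-1-r$ is an involution of $\{0,\dots,n-1\}$, are small additions that the paper leaves implicit.
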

We recognize other  equivalent alternate definitions for a self-inversive polynomial. For example, the multiset of roots of $P(z)$ equal the multiset of roots for $z^dP(1/z)$ \cite{propselfinversive}. In Section 3 we continue by proving and manipulating the following identity in order to prove Theorem \ref{thm1}:

\begin{lemma}\label{lemma-identity}
For all $n\ge1$, we have
\eq{
\frac{B_{6n+2}}{6n+2} = -(4n+1)\binom{4n}{2n}\sum_{r=0}^{n-1}\binom{2n}{2r+1}\frac{B_{2r+2n+2}}{2r+2n+2} \frac{B_{4n-2r}}{4n-2r} .\quad
}
\end{lemma}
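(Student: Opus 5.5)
The plan is to obtain the identity as a specialization of a known Bernoulli convolution identity, namely the one of Agoh and Dilcher for sums of the shape $\sum_k \binom{m}{k} B_{a+k}B_{b-k}/((a+k)(b-k))$ with fixed total index $a+b$. First note what the statement means. Put $v=2n$, so $3v+2=6n+2$, $v+2+2r=2n+2+2r$ and $2v-2r=4n-2r$. Substituting the claimed identity into the definition of $S_v$ in Theorem \ref{thm1} gives
\[
S_{2n}(1)=2+2(4n+1)\binom{4n}{2n}\frac{6n+2}{B_{6n+2}}\cdot\Bigl(-\frac{B_{6n+2}}{(6n+2)(4n+1)\binom{4n}{2n}}\Bigr)=0 .
\]
So Lemma \ref{lemma-identity} is exactly the statement that $z=1$ is a root of $S_v$. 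This is consistent with the abstract, and it pins down the normalization we must reproduce.

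Since $B_k/k=-\zeta(1-k)$, this root reflects the vanishing of the Mordell--Tornheim series at $\alpha=\beta$. That suggests deriving the identity from an integral or partial-fraction evaluation with symmetric weights. Concretely, I would start from the classical evaluation
\[
\int_0^1 B_p(x)B_q(x)\,dx=(-1)^{p-1}\frac{p!\,q!}{(p+q)!}B_{p+q}.
\]
Alternatively, I would start directly from the Agoh--Dilcher convolution. The key steps are as follows.

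\begin{enumerate}
\item Write $\frac{B_{2n+2+2r}}{2n+2+2r}$ and $\frac{B_{4n-2r}}{4n-2r}$ in terms of Bernoulli polynomials. Note the index of the first factor is shifted by $2n$ relative to the binomial index $2r+1$ of $\binom{2n}{2r+1}$. This signals a term $x^{2n}$ (or $x^{2n}(1-x)^{2n}$), expanded binomially, integrated against Bernoulli polynomials.
\item Choose the Agoh--Dilcher parameters $m=2n$, with indices $a=2n+1$ and $b=4n+1$ (total $6n+2$). In the resulting full sum over $k=0,\dots,2n$, discard the terms with odd Bernoulli index using $B_{2j+1}=0$ for $j\ge1$. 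Only the terms $k=2r+1$ survive, together with possible boundary terms at $k=0$ and $k=2n$.
\item Show that the boundary terms either vanish or combine into the left-hand side $B_{6n+2}/(6n+2)$. If the identity instead produces a sum over $r=0,\dots,2n-1$, use the symmetry $r\mapsto n-1-r$ of the summand to fold it onto $r=0,\dots,n-1$.
\end{enumerate}

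The main obstacle is the bookkeeping in the last step. We must match the prefactor $(4n+1)\binom{4n}{2n}$ exactly; I expect it to arise as $\frac{(4n+1)!}{(2n)!^2}$ from a Beta-integral $\int_0^1 x^{2n}(1-x)^{2n}\,dx$ type normalization. We must also confirm the sign. If the direct specialization fails to close, the fallback is the exponential generating function approach. One writes
\[
\sum_k \frac{B_k}{k}\frac{t^k}{(k-1)!}=\frac{d}{dt}\log\frac{t}{e^t-1}\cdot t
\]
(up to routine adjustments), forms the product of two such series, and extracts the coefficient of $t^{6n+2}$ after applying the operator corresponding to $\binom{2n}{\cdot}$. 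Before committing, I would sanity-check the case $n=1$: the identity reads $B_8/8=-5\binom{4}{2}\binom{2}{1}\frac{B_4}{4}\frac{B_4}{4}$. Indeed $-60\cdot\frac{1}{900}=-\frac{1}{15}$ and $\frac{B_8}{8}=-\frac{1}{240}$; this mismatch suggests rechecking the normalization. The actual check is that $-5\cdot6\cdot2\cdot\frac{1}{120^2}\cdot\ldots$ equals $-\frac{1}{240}$, and whatever precise constants this calibration forces are the ones to target.
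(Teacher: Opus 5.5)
There is a genuine gap. The identity that is supposed to do all the work is never stated, and the Agoh--Dilcher convolution you cite does not have the shape you describe. The identity the paper uses concerns undivided products,
\[
(B_k+B_k)^m=\sum_{j=0}^{m}\binom{m}{j}B_{k+j}B_{k+m-j}.
\]
Its right-hand side is $-\frac{(k!)^2}{(2k+1)!}(m+2k+1)B_{m+2k}$ plus a lacunary sum of terms $\frac{B_{2(k-r)}}{k-r}\binom{k+1}{2r+1}\bigl((k-2r)m-(2r+1)k\bigr)B_{m+2r}$. So there is nothing with weights $1/((a+k)(b-k))$ to specialize at $m=2n$, $a=2n+1$, $b=4n+1$.

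Moreover, with those parameters every even $k$ (including $k=0$ and $k=2n$) gives odd Bernoulli indices $\ge 3$, so there are no boundary terms. Your full sum \emph{is} the lemma's sum, and your step 3 amounts to ``evaluate it'', i.e.\ the lemma itself. (Your reformulation as $S_{2n}(1)=0$ is correct, but it is a restatement, not progress.)

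The generating-function fallback has the same problem. The series you wrote is just $\frac{t}{e^t-1}-1$. The right one is $L(t)=\sum_{k\ge1}\frac{B_k}{k}\frac{t^k}{k!}=\log\frac{1-e^{-t}}{t}$. Your sum equals $(2n)!$ times the coefficient of $t^{2n}$ in $\bigl(L^{(2n+1)}(t)\bigr)^2$, and computing that is the whole problem, not routine bookkeeping.

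What is missing is the mechanism that produces the weights $\frac{1}{(2n+2r+2)(4n-2r)}$. The paper takes $k=2n$, $m=2n+2$. The left side becomes $2B_{2n}B_{4n+2}+\sum_{r=0}^{n-1}\binom{2n+2}{2r+2}B_{2n+2r+2}B_{4n-2r}$, and the $r=n$ term of the lacunary sum matches and cancels $2B_{2n}B_{4n+2}$. The first term on the right is $-\frac{((2n)!)^2}{(4n+1)!}(6n+3)B_{6n+2}$. That is where your guessed Beta-type constant $(4n+1)\binom{4n}{2n}$ actually comes from.

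Solving for $B_{6n+2}$ gives weights $\binom{2n}{2r+1}X_r$ with
\[
X_r=\frac{(2n+1)(n+1)}{(r+1)(2n-2r)}+\frac{2(n-2r)(2n+1)}{(2n-r)(2n-2r)}.
\]
This $X_r$ is \emph{not} proportional to $\frac{1}{(2n-r)(n+r+1)}$. Only the average is: $\frac12(X_r+X_{n-1-r})=\frac{3(2n+1)(3n+1)}{2(2n-r)(n+r+1)}$. Since $4(2n-r)(n+r+1)=(4n-2r)(2n+2r+2)$, this gives exactly the divided form. That symmetrization of non-symmetric weights is the key step, and using $r\mapsto n-1-r$ only to fold a summation range does not supply it.

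Finally, your $n=1$ check actually succeeds: $-5\cdot 6\cdot 2\cdot (B_4/4)^2=-\frac{60}{14400}=-\frac{1}{240}=\frac{B_8}{8}$. The apparent mismatch came from using $B_4^2=\frac{1}{900}$ in place of $(B_4/4)^2$, so no recalibration is needed.
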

In Section 4 we further analyze the root structure of $S_v$ by showing $\pm1$ are the only double roots for $v\ge4$. Again we use tools from Lakatos and Losonczi, but first introduce some definitions using the standard notation for the argument of a complex number $z$, $\text{arg}(z)$ \cite[Theorem 1(ii)-2]{lakatos2004self}:
\begin{align*}
    \beta_j &= \text{arg}\left( A_j \parens{\frac{\Bar{A_0}}{A_{d}}}^{1/2}\right), \\
    \phi_j &= \frac{2(\pi j-\beta_j)}{d}.
\end{align*}

\begin{theorem}\label{Simple-Roots-Thm}
If all roots of $P(x)$ are on the unit circle and inequality \eqref{Inequality} holds with equality, then all roots of $P(z)$ are either simple or double roots. All double roots are of the form $e^{i\phi_j}$. In addition $e^{i\phi_j}$ is a double root of $P(z)$ if and only if for all $k\in \{1,2,\ldots,\lfloor\frac{\deg(P)}{2} \rfloor \}$ such that $A_k\ne0$,
    \eq{\label{cosineeq}
    \cos{\parens{\beta_{d-k}+\left(\frac{\deg(P)}{2}-k\right)\phi_j}} = (-1)^{j+1}.
    }
\end{theorem}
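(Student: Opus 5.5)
The plan is to reduce everything to a real trigonometric polynomial on the unit circle and then count sign changes, in the spirit of the Rouché/counting arguments of Lal\'in and Smyth mentioned after Theorem \ref{UnitCircle-thm}.

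\textbf{Step 1 (real form on the circle).} Since $P$ is self-inversive of degree $d$, we have $A_{d-k}=\xi \bar A_k$ with $|\xi|=1$, and in particular $|A_0|=|A_d|$. Multiply $P$ by the unimodular constant $\gamma=(\bar A_0/A_d)^{1/2}/|A_0|^{1/2}\cdot|A_d|^{1/2}$, i.e.\ the normalization already built into the definition of $\beta_j$. Writing $z=e^{i\theta}$, the coefficients pair up as $k\leftrightarrow d-k$, so
\[
\gamma e^{-id\theta/2}P(e^{i\theta})=Q(\theta):=2|A_d|\cos\!\Big(\tfrac{d\theta}{2}+\beta_d\Big)+\sum_{k}2|A_{d-k}|\cos\!\Big(\big(\tfrac d2-k\big)\theta+\beta_{d-k}\Big),
\]
where the sum runs over $1\le k\le\lfloor d/2\rfloor$. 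If $d$ is even, the middle term $k=d/2$ carries weight $|A_{d/2}|$ rather than $2|A_{d/2}|$. I would first check that the self-inversive relation makes each $\beta_{d-k}$ consistent, so that $Q$ is genuinely real-valued. Zeros of $P$ on the circle then correspond, with multiplicity, to zeros of $Q$ in $[0,2\pi)$.

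\textbf{Step 2 (sign pattern at the nodes).} Consider the points $\theta=\phi_j$, $j=0,\dots,d-1$, where the leading cosine equals $(-1)^j$. There the leading term is $(-1)^j2|A_d|$. By equality in \eqref{Inequality}, the remaining terms have total absolute value at most $\sum_{k=1}^{d-1}|A_k|=2|A_d|$. Hence $(-1)^jQ(\phi_j)\ge 0$, with equality if and only if every remaining term with $A_k\neq0$ equals $-(-1)^j$ times its full weight. This is exactly condition \eqref{cosineeq}, with the $\beta_{d-k}$ evaluated at $\theta=\phi_j$.

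\textbf{Step 3 (counting).} By the intermediate value theorem, each of the $d$ closed arcs $[\phi_j,\phi_{j+1}]$ contains a zero of $Q$. Suppose $Q(\phi_j)=0$. Then every cosine term in $Q$ sits at an extremum simultaneously, so $Q'(\phi_j)=0$ and $\phi_j$ is a zero of multiplicity at least $2$; it serves as the zero for both adjacent arcs. Let $m$ be the number of nodes that are zeros. A nodal zero can serve at most the two arcs adjacent to it, so at most $2m$ arcs are covered by nodal zeros. The remaining at least $d-2m$ arcs must each contain a zero in their open interior. Summing, the multiplicities total at least $2m+(d-2m)=d$. This is the full degree, so every inequality is sharp:
\begin{itemize}
\item each nodal zero has multiplicity exactly $2$;
\item each remaining open arc contains exactly one simple zero;
\item there are no zeros of higher multiplicity.
\end{itemize}
This gives all three claims: roots are simple or double, double roots occur only at the $e^{i\phi_j}$, and $e^{i\phi_j}$ is a double root if and only if \eqref{cosineeq} holds. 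For the converse direction, \eqref{cosineeq} forces $Q(\phi_j)=0$ by Step 2, and Step 3 then upgrades this zero to a double root.

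\textbf{Main obstacle.} I expect the bookkeeping in Step 1 to be the main obstacle. The branch of the square root in $\beta_j$ and the parity of $d$ (including the middle coefficient) must be chosen so that the node locations are exactly $\phi_j=2(\pi j-\beta_j)/d$ and the sign is exactly $(-1)^{j+1}$. I would fix the branch by requiring $\beta_d=0$ after normalization, then verify the middle-term case separately.
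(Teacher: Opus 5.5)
The paper gives no proof of this statement. It is quoted from Lakatos and Losonczi, and the Rouch\'e-type argument of Lal\'in and Smyth is mentioned only for Theorem \ref{UnitCircle-thm}. Your proposal therefore replaces a citation with a self-contained argument, and it is correct.

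The check you deferred in Step 1 is immediate. Write $A_k=\xi\bar A_{d-k}$ and $\gamma=(\bar A_0/A_d)^{1/2}$. Then $\gamma^2\xi=1$, so $\gamma\xi=\bar\gamma$, and hence $\gamma A_k e^{-i(d/2-k)\theta}=\overline{\gamma A_{d-k}e^{i(d/2-k)\theta}}$. Each pair therefore contributes $2|A_{d-k}|\cos\bigl((d/2-k)\theta+\beta_{d-k}\bigr)$, and for even $d$ the middle term $\gamma A_{d/2}$ is real. Since $\theta\mapsto e^{i\theta}$ is locally biholomorphic and $e^{-id\theta/2}\neq 0$, the vanishing orders of $Q$ and $P$ agree. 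Steps 2 and 3 then go through as written.

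Compared with a Rouch\'e or continuity argument, which naturally gives unimodularity but not multiplicity information, your sign-alternation count delivers all three conclusions at once. It also shows that the hypothesis ``all roots on the unit circle'' is redundant: the count alone already produces $d$ zeros on the circle, counted with multiplicity.

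Three remarks on what you call the main obstacle.
\begin{enumerate}
\item Your nodes are $2(\pi j-\beta_d)/d$, not $2(\pi j-\beta_j)/d$ as the paper literally writes, and yours is the right reading. In Section 4 the paper itself uses $\phi_j=\pi j/(3n+1)$, i.e.\ $\beta_d=0$, even though $\beta_j=\pi$ for the interior indices.
\item You cannot in general normalize to $\beta_d=0$. The quantity $e^{2i\beta_d}=A_d\bar A_0/|A_d|^2$ is determined by $P$, and is unchanged by unimodular rescaling. You do not need this normalization anyway. Flipping the branch of the square root negates $Q$, adds $\pi$ to every $\beta_\ell$, and relabels $\phi_j$ as $\phi_{j-1}$, which leaves condition \eqref{cosineeq} at each node unchanged. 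So just work with a general $\beta_d$.
\item For odd $d$, $Q$ is $2\pi$-antiperiodic rather than periodic. This causes no trouble: Step 2 holds for every integer $j$, including $j=d$. It is consistent with $Q(\phi_0+2\pi)=-Q(\phi_0)$ because $(-1)^d=-1$. So the intermediate value argument on the last arc $[\phi_{d-1},\phi_0+2\pi]$ is valid, and the count over one period stands.
\end{enumerate}
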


Finally, in Section \ref{sec:conclusion}, we discuss some possibilities for further work.

\section{Self Inverse Polynomial}

Here we prove Lemma \ref{lemma-inversive} and show that Equation \eqref{Inversive_eq} holds for all even $v \ge0$.

\begin{proof}[Proof of Lemma 1] We need to show $S_v(z)=z^{3v+2}\overline{S_v}(1/z)$ for even non-negative $v$ to show $S_v(z)$ is self-inverse. Observe that $\overline{S_v}=S_v$ since $S_v$ only has real coefficients. Taking $v=2n$, we have

\begin{align*}
z^{6n+2}\overline{S_{2n}}(1/z)=z^{6n+2}+1+
\frac{2(6n+2)(4n+1)\binom{4n}{2n}}{B_{6n+2}}\sum_{r=0}^{n-1}\binom{2n}{2r+1}\frac{B_{2n+2r+2}}{2n+2r+2}\frac{B_{4n-2r}}{4n-2r}z^{4n-2r}.
\end{align*}

Now, we just need to show that these polynomials are indeed equivalent. Recall that $A_j$ is the coefficient of $z^j$ in $S_v(z)$. Here, $A_0 = A_d = 1$ and so we just focus on all intermediate $j=2n+2,2n+4,\ldots,4n$. For such $j$, we have $A_{j}={2(4n+1)\binom{4n}{2n}\frac{6n+2}{B_{6n+2}}} \binom{2n}{4n-j+1}\frac{B_{d-j}}{d-j}\frac{B_{j}}{j}.$ We need to show that for all $j$, we have $A_j=A_{d-j}$. The coefficient of $z^{2n+2r+2}$ in $S_{2n}(z)$ is
\[
{2(4n+1)\binom{4n}{2n}\frac{6n+2}{B_{6n+2}}} \binom{2n}{2r+1}\frac{B_{2n+2r+2}}{2n+2r+2}\frac{B_{4n-2r}}{4n-2r}.
\]
We neglect the $2(4n+1)\binom{4n}{2n}\frac{6n+2}{B_{6n+2}}$ prefactor that all nontrivial powers share and focus on the $r$-dependent $\binom{2n}{2r+1}\frac{B_{2n+2r+2}}{2n+2r+2}\frac{B_{4n-2r}}{4n-2r}$.
%Both polynomials (excluding $z^0,z^{6n+2}$) have terms with powers $\{z^{2n+2},z^{2n+4},\ldots,z^{4n}\}$. We show in both polynomials, that these powers have the same coefficients.
%\\
%\\
Now note that to match powers, we must take $r'=n-1-r$ in the sum for $z^{6n+2}S_{2n}(1/z)$. Using  substitution, the coefficient is
\begin{align*}
&\binom{2n}{2(n-1-r)+1} \frac{B_{2n+2(n-1-r)+2}}{2n+2(n-1-r)+2}
\frac{B_{4n-2(n-1-r)}}{4n-2(n-1-r)}
\\&=
\binom{2n}{2n-2r-1} \frac{B_{4n-2r}}{4n-2r} \frac{B_{2n+2r+2}}{2n+2r+2}
\\&=
\binom{2n}{2r+1} 
\frac{B_{2n+2r+2}}{2n+2r+2} 
\frac{B_{4n-2r}}{4n-2r}.
\end{align*}

Clearly, $1$ and $z^{6n+2}$ have the same coefficient of $1$, and all powers with nontrivial coefficients have the same coefficients after the ``inversion". Hence, $S_{v}(z)$ is self-inversive for even $v$.
\end{proof}

\section{Proof of Theorem 1}
In this Section we use Theorem \ref{UnitCircle-thm}, that for some self-inversive polynomial with coefficients $\set{A_i}$ and degree $d$, the inequality
\[
|A_d| \geq \frac12 \sum_{j=1}^{d-1}\left|A_j \right|
\]
implies the self-inversive polynomial has roots exclusively on the unit circle. First, we prove Lemma \ref{lemma-identity}.

\begin{proof}[Proof of Lemma 2]
We begin with \cite[Theorem 2.1, Corollary 2.1 (Equation 2.2)]{agoh2007convolution} as shown in Section \ref{tools}. For integers $k \ge 2$ and $m \ge 0$:
\begin{equation} \label{agoh}
\begin{split}
(B_k + B_k)^m = & -\frac{(k!)^2}{(2k+1)!}(m+2k+1)B_{m+2k} \\
& + \frac{(-1)^{k+1}}{k+1}\sum_{r=0}^{\lfloor k/2 \rfloor} \frac{B_{2(k-r)}}{k-r}\binom{k+1}{2r+1}\big((k-2r)m - (2r+1)k\big)B_{m+2r}.
\end{split}
\end{equation}

Substitute $k=2n$ and $m=2n+2$ into both sides of Equation \eqref{agoh}.  Evaluating the left-hand side of Equation \eqref{agoh}, the umbral convolution expands to non-zero even indexed terms since odd indexed Bernoulli numbers (with index greater than or equal to 2) are 0. The indices of the Bernoulli numbers above are all larger than $2n+2$ for $n \geq 0$, thus $B_1$ does not appear in the sum. For more on umbral calculus see \cite{gessel2001applications}. This allows us to set $j=2s$:

\begin{align*}
(B_{2n}+B_{2n})^{2n+2} &=  \sum_{j=0} ^{2n+2}\binom{2n+2}{j}B_{2n+j}B_{4n+2-j}\\
&= \sum_{s=0}^{n+1} \binom{2n+2}{2s} B_{2n+2s} B_{4n+2-2s} \\
&= 2B_{2n}B_{4n+2} + \sum_{s=1}^n \binom{2n+2}{2s} B_{2n+2s} B_{4n+2-2s}.
\end{align*}
Shifting the summation index by $r = s-1$ yields the left-hand side as
\begin{equation} \label{lhs}
2B_{2n}B_{4n+2} + \sum_{r=0}^{n-1} \binom{2n+2}{2r+2} B_{2n+2r+2} B_{4n-2r}.
\end{equation}
Now, we look at the right-hand side of Equation \eqref{agoh}. We again let $k=2n, \: m=2n+2$, so that this right-hand side becomes
\begin{equation} 
\begin{split}
-\frac{((2n)!)^2}{(4n+1)!}(6n+3)B_{6n+2} -\frac{1}{2n+1}\sum_{r=0}^{n}\binom{2n+1}{2r+1}\frac{B_{4n-2r}}{2n-r} \big((2n-2r)(2n+2)-2n(2r+1)\big)B_{2n+2r+2}.
\end{split}
\end{equation}
Since $(2n-2r)(2n+2)-2n(2r+1) = 2(n-2r)(2n+1)$, we have
\begin{align*}
    &-\frac{3(2n+1)}{(4n+1)\binom{4n}{2n}}B_{6n+2} - 2\sum_{r=0}^{n} \frac{n-2r}{2n-r}\binom{2n+1}{2r+1} B_{2n+2r+2}B_{4n-2r} \\
&= -\frac{3(2n+1)}{(4n+1)\binom{4n}{2n}}B_{6n+2} + 2B_{2n}B_{4n+2} - 2\sum_{r=0}^{n-1} \frac{n-2r}{2n-r}\binom{2n+1}{2r+1} B_{2n+2r+2}B_{4n-2r}.
\end{align*}
Thus, by Equation \eqref{agoh}, we have
\begin{align*}
    \sum_{r=0}^{n-1} \binom{2n+2}{2r+2} B_{2n+2r+2} B_{4n-2r}=-\frac{3(2n+1)}{(4n+1)\binom{4n}{2n}}B_{6n+2} - 2\sum_{r=0}^{n-1} \frac{n-2r}{2n-r}\binom{2n+1}{2r+1} B_{2n+2r+2}B_{4n-2r}.
\end{align*}
Solving for $B_{6n+2}$ and reducing yields
\begin{align*}
B_{6n+2} &= -\frac{(4n+1)}{3(2n+1)} \binom{4n}{2n} \sum_{r=0}^{n-1}\left(\binom{2n+2}{2r+2}+2\cdot\frac{n-2r}{2n-r} \binom{2n+1}{2r+1}\right)B_{2n+2r+2}B_{4n-2r} \\
&= -\frac{(4n+1)}{3(2n+1)} \binom{4n}{2n} \sum_{r=0}^{n-1}\binom{2n}{2r+1} {\left[ \frac{(2n+1)(n+1)}{(r+1)(2n-2r)} + \frac{2(n-2r)(2n+1)}{(2n-r)(2n-2r)}\right]}B_{2n+2r+2}B_{4n-2r}.
\end{align*}
We define
\[
X_r:= \frac{(2n+1)(n+1)}{(r+1)(2n-2r)} + \frac{2(n-2r)(2n+1)}{(2n-r)(2n-2r)}.
\]
The binomial $\binom{2n}{2r+1}$ and the Bernoulli product $B_{2n+2r+2}B_{4n-2r}$ are invariant under the reflection $r \to n-1-r$, so we symmetrize the sum and find 
\eq{B_{6n+2} = -\frac{(4n+1)}{3(2n+1)} \binom{4n}{2n} \sum_{r=0}^{n-1}\binom{2n}{2r+1} \left( \frac{X_r+X_{n-1-r}}{2}\right) B_{2n+2r+2}B_{4n-2r}. }
%Thus, giving equality to the sum over $X_r$ symmetrized average $\frac{1}{2}(X_r + X_{n-1-r})$. 
This simplifies nicely as
\[
\frac{1}{2}(X_r + X_{n-1-r}) = \frac{3(2n+1)(3n+1)}{2(2n-r)(n+r+1)},
\]
so that
%Substituting $\frac{1}{2}(X_r + X_{n-1-r})$ into the identity yields
\begin{align*}
B_{6n+2} &= -\frac{4n+1}{3(2n+1)} \binom{4n}{2n} \sum_{r=0}^{n-1}\binom{2n}{2r+1} \left[ \frac{3(2n+1)(3n+1)}{2(2n-r)(n+r+1)} \right] B_{2n+2r+2}B_{4n-2r}.
\end{align*}
By dividing by $6n+2$ and simplifying, we have the identity in our desired form,
\eq{
\frac{B_{6n+2}}{6n+2} = -(4n+1)\binom{4n}{2n}\sum_{r=0}^{n-1}\binom{2n}{2r+1}\frac{B_{2r+2n+2}}{2r+2n+2} \frac{B_{4n-2r}}{4n-2r}.}
\end{proof}
Now we are ready to prove Theorem \ref{thm1}.
\begin{proof}[Proof of Theorem 1]
\noindent From Lemma \ref{lemma-identity},
\[
\frac{B_{6n+2}}{6n+2} = -(4n+1)\binom{4n}{2n}\sum_{r=0}^{n-1}\binom{2n}{2r+1}\frac{B_{2r+2n+2}}{2r+2n+2} \frac{B_{4n-2r}}{4n-2r}.
\]
Equivalently,
\begin{equation}
\label{NonAbsSumofA}
\begin{split}
1   &=-\frac{(6n+2)(4n+1)\binom{4n}{2n}}{B_{6n+2}}\sum_{r=0}^{n-1}\binom{2n}{2r+1}\frac{B_{2n+2r+2}}{2n+2r+2}\frac{B_{4n-2r}}{4n-2r}
\\  &=-\frac12 \sum_{r=0}^{n-1}\frac{2(6n+2)(4n+1)\binom{4n}{2n}}{B_{6n+2}}\binom{2n}{2r+1}\frac{B_{2n+2r+2}}{2n+2r+2}\frac{B_{4n-2r}}{4n-2r}
\\  &=-\frac12 \sum_{r=0}^{n-1}A_{2n+2r+2}
\\  &=-\frac12 \sum_{j=1}^{d-1}A_{j}.
\end{split}
\end{equation}

We use the standard notation for the sign of $x$, $\text{sgn}(x)$ to help us find the sign of each term. It is well known that $\text{sgn}(B_{2m})=(-1)^{m+1}$. So, $\text{sgn}(B_{2n+2r+2})\cdot\text{sgn}(B_{4n-2r})\cdot \text{sgn}(B_{6n+2})=(-1)^{n+r+2}\cdot (-1)^{2n-r+1}\cdot (-1)^{3n+2}=-1$. Hence, for $A_j\ne0$ and $j\ne0,d$,
\eq{\label{sgnAj}\text{sgn}(A_j)=-1.}
This implies $|A_j| = -A_j$. Substituting this into Equation \eqref{NonAbsSumofA} yields the equation
\[
1 = \frac12\sum_{j=1}^{d-1}|A_j|.
\]
Since $|A_0|=|A_d|=1$,
\eq{
|A_d|=\frac12\sum_{j=1}^{d-1}|A_j|.
}

\noindent Thus, inequality \eqref{Inequality} holds with equality. Therefore, by Lemma \ref{lemma-inversive} and Theorem \ref{UnitCircle-thm}, all zeros of $S_v$ lie on the unit circle.
\end{proof}

\section{Double Roots}
In this Section we prove consequences of equality from Section 3. 
In particular, the only roots of $S_v$ with multiplicity two are $\pm1$. We use Theorem \ref{Simple-Roots-Thm} from Lakatos and Losonczi \cite{lakatos2004self}. 

\begin{corollary}
    For all even $v\ge4$, $S_v(z)$ have double roots at $\pm1$ and all other roots are simple.
\end{corollary}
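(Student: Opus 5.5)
The plan is to apply Theorem \ref{Simple-Roots-Thm} to $P=S_v$, $v=2n$, $d=6n+2$. Its hypotheses hold by the proof of Theorem \ref{thm1}: all zeros lie on the unit circle and \eqref{Inequality} holds with equality. So every root is simple or double, every double root has the form $e^{i\phi_j}$, and the cosine criterion \eqref{cosineeq} decides exactly which candidates are double. The work is therefore to compute $\beta_j$ and $\phi_j$ explicitly and solve \eqref{cosineeq}.

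\textbf{Computing the arguments.} Since $A_0=A_d=1$, we may take $(\bar A_0/A_d)^{1/2}=1$, so $\beta_j=\arg(A_j)$.
\begin{itemize}
\item By \eqref{sgnAj}, every nonzero middle coefficient is negative, so $\beta_j=\pi$ for those $j$.
\item At the endpoints, $\beta_0=\beta_d=0$.
\item The nonzero middle coefficients are exactly $A_k$ for even $k\in\{2n+2,\dots,4n\}$. This uses the fact that $\binom{2n}{2r+1}$ and even-index Bernoulli numbers are nonzero.
\end{itemize}
I will read the candidate angles with $\beta=\beta_d=0$, which gives $\phi_j=2\pi j/d$ for $j=0,\dots,d-1$. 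The candidates are then the $d$-th roots of unity. I will check this against the conventions in \cite{lakatos2004self}, since this is where a sign or branch slip could happen.

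\textbf{Reducing the criterion.} Let $K$ be the set of even $k$ with $2n+2\le k\le 3n+1$. These are the indices $k\le d/2$ with $A_k\ne0$, and for each of them $d-k$ is also a middle index, so $\beta_{d-k}=\pi$. Condition \eqref{cosineeq} becomes
\[
-\cos\!\Big(\tfrac{\pi j(3n+1-k)}{3n+1}\Big)=(-1)^{j+1}.
\]
Expanding $\cos(\pi j-\pi jk/(3n+1))=(-1)^j\cos(\pi jk/(3n+1))$, this is equivalent to
\[
\cos\!\big(\pi jk/(3n+1)\big)=1, \qquad\text{that is,}\qquad (6n+2)\mid jk \quad\text{for every } k\in K.
\]
\begin{itemize}
\item For $j=0$ (the candidate $z=1$), the divisibility is trivial.
\item For $j=3n+1$ (the candidate $z=-1$), we need $(6n+2)\mid(3n+1)k$, which holds because every $k\in K$ is even.
\end{itemize}
Hence $\pm1$ are double roots. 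Here the ``if and only if'' in Theorem \ref{Simple-Roots-Thm} is essential, because it also gives the existence direction.

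\textbf{Excluding other candidates.} It remains to show that no other $j\in\{1,\dots,d-1\}$ satisfies the divisibility condition.
\begin{itemize}
\item For $n\ge3$ (i.e.\ $v\ge6$), $K$ contains both $2n+2$ and $2n+4$. Subtracting the two divisibility relations gives $(6n+2)\mid 2j$, so $(3n+1)\mid j$. This forces $j\in\{0,3n+1\}$.
\item For $n=2$ ($v=4$), $K=\{6\}$ and $d=14$. The condition $14\mid 6j$ gives $7\mid j$, again only $j=0,7$.
\end{itemize}
So the only double roots are $\pm1$, and all other roots are simple.

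I expect the main obstacle to be the bookkeeping in the first step: correctly matching the paper's $\beta_j$ and $\phi_j$ to the precise statement in \cite{lakatos2004self} (which index supplies the candidate angles, and the branch of the square root), and confirming that $K$ is exactly the set of relevant $k$. The edge case $v=4$, where $K$ is a singleton, also needs to be handled separately as above.
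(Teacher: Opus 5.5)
Your proposal is correct and follows the paper's proof essentially step for step. Both reduce \eqref{cosineeq} to $(6n+2)\mid jk$ for all even $k\in[2n+2,3n+1]$, use $k=2n+2,2n+4$ to force $(3n+1)\mid j$ when $n\ge3$ (your subtraction is the paper's coprimality of $n+1,n+2$), and check $n=2$ separately. Your explicit choice $\beta=\beta_d=0$ in $\phi_j$ is also what the paper implicitly uses when it writes $\phi_j=\pi j/(3n+1)$, although its stated formula with $\beta_j=\pi$ would literally give a shifted angle.
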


\begin{proof}
Observe some simplifications of $\beta_j$ and $\phi_j$. We have $A_j=A_{d-j}$ as $S_v(z)$ is \textit{self-inversive} with all real coefficients. Hence, we can simplify $\beta_j$:
\begin{align*}
\beta_j&=\text{arg}\left(A_j\left(\frac{\Bar{A_0}}{A_{d}}\right)^{1/2}\right) \\
    &=\text{arg}\left(A_j\right)
\end{align*} 

By using Equation \ref{sgnAj}, we have $\beta_j= \pi$ when $A_j\ne0$. Since we know the degree of $S_v$ we can substitute $d=6n+2$. Then $\phi_j= \frac{\pi j}{3n+1}$. Now we find all $j$ such that for all even $k$ satisfying $2n+2\le k \le 3n+1$.
\spliteq{}{
(-1)^{j+1} 
&=\cos{\parens{\beta_{6n+2-k}+(3n+1-k)\phi_j}} \\
&= \cos{\parens{\pi+(3n+1-k)\frac{\pi j}{3n+1}}}\\
&= \cos{\parens{\pi (j+1)-\frac{\pi jk}{3n+1}}}
}
Therefore,
\eq{
    \pi (j+1)-\frac{\pi jk}{3n+1} \in 2\pi \mathbb{Z}+\pi(j+1).
}
Simplifying yields,
\eq{
    \frac{jk}{3n+1} \in 2\mathbb{Z}.
}
Since $k$ is even, we can replace $k=2k'$. Thus for all integers $k'$, such that $n+1\le k' \le \frac{3n+1}{2} $,
\eq{\label{EndingSection4}
    \frac{jk'}{3n+1} \in \mathbb{Z}.
}
We now show $3n+1 | j$ must hold for Equation \eqref{EndingSection4} to hold by splitting into two cases, $n=2$ and $n\ge 3$. First, for $n\ge 3$$n+1,n+2 \le (3n+1)/2$, thus $n+1,n+2$ are values of $k'$ for which the above must holds. Since $k'=n+1,n+2$ are necessarily coprime, in order for Equation \eqref{EndingSection4} to hold, $3n+1 | j$. Now we consider $n=2$, necessarily $k'=3$, and $3n+1=7$, implying $k',3n+1$ co-prime, thus $3n+1 | j$. Therefore, for $n\ge 2$ Equation \eqref{EndingSection4} holds if and only if $j$ is a multiple of $3n+1$. By Theorem \ref{Simple-Roots-Thm}, $e^{i\phi_j}$ is a double root if and only if $j=0$ or $3n+1$. Thus, by evaluating $e^{i\phi_j}$, the only double roots of $S_v(z)$ are $\pm1$, and all other roots of $S_v(z)$ are simple and on the unit circle.
\end{proof}
\section{Conclusion}\label{sec:conclusion}
%This proof reducing to a Bernoulli convolution identity is of interest. 
%We can expect other Ramanujan-like polynomials with binomial coefficients to have similar properties for their roots. We can also expect this result to hold for other formulations involving binomial coefficients. In particular other connections between specific trivial zeros of a zeta-inspired series, a sequence of polynomials and a general identities involving Bernoulli numbers. %However methods for finding a series associated with an identity appear unexplored.

We can expect other Ramanujan-type polynomials involving binomial coefficients and Bernoulli number coefficients to have similar properties regarding their roots. It would be interesting to systematically evaluate which related polynomials also have all roots on the complex unit circle, especially when they naturally arise as counterparts of a zeta-type series. 

%Explicitly, finding existing or new Bernoulli identities which determine the coefficients of some family of polynomials with the property $|A_d|=\frac12 \sum _{j=1}^{d-1}|A_j|$. 

% However, finding a series with properties that match the construction is not as clear.

\section{Acknowledgments}
We thank Tanay Wakhare for providing valuable feedback. GPT 5.4-Pro was used during the ideation phase. All claims were verified by the authors, who take full responsibility for the final mathematical accuracy of this work.
\nocite{*}
%\pagebreak
\bibliographystyle{plain}
\bibliography{refs}

\end{document}